\newtheorem{thm}{Theorem}[section]
\newtheorem{prop}[thm]{Proposition}
\newtheorem{lem}[thm]{Lemma}
\theoremstyle{definition}
\newtheorem{ex}[thm]{Example}
\theoremstyle{remark}
\newtheorem{rem}[thm]{Remark}
\newcommand{\N}{\mathbb{N}}
\newcommand{\Q}{\mathbb{Q}}
\newcommand{\Z}{\mathbb{Z}}
\DeclareMathOperator{\Ima}{Im}
\DeclareMathOperator{\ord}{ord}
\begin{document}
\title[Characterization of order structures avoiding 3-APs]{Characterization of order structures avoiding three-term arithmetic progressions}
\author{Minoru Hirose}
\address{Institute for Advanced Research, Nagoya University, Furo-cho, Chikusa-ku, Nagoya, 464-8602, Japan}
\email{minoru.hirose@math.nagoya-u.ac.jp}
\author{Shingo Saito}
\address{Faculty of Arts and Science, Kyushu University, 744, Motooka, Nishi-ku, Fukuoka, 819-0395, Japan}
\email{ssaito@artsci.kyushu-u.ac.jp}
\keywords{Arithmetic progression; Order structure}
\subjclass[2020]{Primary 06A05; Secondary 11B25}
\begin{abstract}
 It is known that the set of all nonnegative integers may be equipped with a total order
 that is chaotic in the sense that there is no monotone three-term arithmetic progressions.
 Such chaotic order must be so complicated that
 the resulting ordered set cannot be order isomorphic to the set of all nonnegative integers or the set of all integers with the standard order.
 In this paper, we completely characterize order structures of chaotic orders on the set of all nonnegative integers,
 as well as on the set of all integers and on the set of all rational numbers.
\end{abstract}
\maketitle

\section{Introduction}
It has long been known (\cite{EJ,L,O,T}) that it is possible to arrange the integers $1,\dots,n$ (and consequently any finitely many rational numbers)
into a sequence $a_1,\dots,a_n$ without a three-term subsequence forming an arithmetic progression,
or more precisely without $i$, $j$, $k$ with $1\le i<j<k\le n$ and $a_j-a_i=a_k-a_j$.
In contrast, Davis, Entringer, Graham, and Simmons~\cite{DEGS} proved that
it is impossible to arrange all of the positive integers (or equivalently all of the nonnegative integers)
into a sequence $a_1,a_2,\dots$ or a two-sided sequence $\dots,a_{-1},a_0,a_1,\dots$
without a three-term subsequence forming an arithmetic progression.
On the other hand, Ardal, Brown, and Jungi\'{c}~\cite{ABJ} proved that
there exists a total order $\preceq$ on $\Q$ such that there are no $a,b,c\in\Q$ with $a\prec b\prec c$ and $b-a=c-b$,
which implies that such total order exists on $\N$ (by which we mean the set of all nonnegative integers) and on $\Z$;
it should be noted that although they invoked K\H{o}nig's infinity lemma to obtain the total order on $\Q$ (and hence the resulting total order is rather abstract),
they also gave an explicit total order on $\Z$ (and hence on $\N$).

To summarize these results, we find it convenient to introduce the following terminology.
For a set $S$ of rational numbers and a totally ordered set $(X,\preceq)$,
we say that a bijection $f\colon S\to X$ is \emph{chaotic} if there are no $a,b,c\in S$ with $f(a)\prec f(b)\prec f(c)$ and $b-a=c-b$.
Then the results of Davis, Entringer, Graham, and Simmons say that
if $S=\N$
and $X=\N,\Z$ with the standard order,
then no bijection $f\colon S\to X$ is chaotic;
the results of Ardal, Brown, and Jungi\'{c} say that
if $S=\N,\Z,\Q$, then there exist a countably infinite totally ordered set $(X,\preceq)$ and a chaotic bijection $f\colon S\to X$.

This observation leads us to the following natural question:
given $S=\N,\Z,\Q$, which countably infinite totally ordered sets $(X,\preceq)$ admit a chaotic bijection $f\colon S\to X$?
Our main theorem completely answers this question.
To state our main theorem, recall that a point $p$ in a totally ordered set $(X,\preceq)$ is said to be \emph{isolated}
if it is isolated in the topological space $X$ endowed with the order topology,
namely (assuming that $X$ has at least two elements) if either
(i) $\{x\in X\mid x\prec x_0\}=\{p\}$ for some $x_0\in X$,
(ii) $\{x\in X\mid x\succ x_0\}=\{p\}$ for some $x_0\in X$, or
(iii) $\{x\in X\mid x_0\prec x\prec x_1\}=\{p\}$ for some $x_0,x_1\in X$.

\begin{thm}[Main Theorem]\label{thm:main}
 Let $(X,\preceq)$ be a countably infinite totally ordered set.
 \begin{enumerate}
  \item There exists a chaotic bijection $f\colon\N\to X$ if and only if $X$ has no isolated points.
  \item There exists a chaotic bijection $f\colon\Z\to X$ if and only if $X$ has no isolated points.
  \item There exists a chaotic bijection $f\colon\Q\to X$ if and only if $X$ has no isolated points and either $X$ does not have a maximum or $X$ does not have a minimum.
 \end{enumerate}
\end{thm}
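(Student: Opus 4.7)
The plan is to split the theorem into necessity and sufficiency, treating the three parts in parallel.

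For necessity, Part~(3)'s ``no both max and min'' clause admits a direct midpoint argument: if $m = \min X$ and $M = \max X$ with preimages $q_m, q_M \in \Q$, then $q = (q_m + q_M)/2 \in \Q$ is distinct from both, so bijectivity forces $m \prec f(q) \prec M$, and the AP $(q_m, q, q_M)$ violates chaoticity. This argument requires the midpoint to lie in $S$, which is automatic only for $S = \Q$. The no-isolated-points clause is handled by case analysis on isolation type. In case~(i), with $p = \min X$ and immediate successor $x_1$, set $n_0 = f^{-1}(p)$ and $n_1 = f^{-1}(x_1)$; the AP $(n_0, n_1, 2n_1 - n_0)$ maps under $f$ to $(p, x_1, f(2n_1 - n_0))$, and chaoticity pins $f(2n_1 - n_0) \preceq x_1$, so $f(2n_1 - n_0) \in \{p, x_1\}$, contradicting bijectivity. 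This works immediately whenever $2n_1 - n_0 \in S$; for $S = \N$ with $n_1 < n_0/2$, a further AP argument closes the gap. Case~(ii) is dual. Case~(iii) (with $x_0 \prec p \prec x_1$ consecutive) is the most involved: chaoticity at $n_0$ only yields a reflection symmetry of the partition of $S \setminus \{n_0\}$ into ``below-$p$'' and ``above-$p$'' preimages, and one must combine this with chaoticity from APs not passing through $n_0$ to extract a Davis--Entringer--Graham--Simmons-style contradiction.

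For sufficiency, I would build the bijection by back-and-forth: enumerate $S$ and $X$, and inductively construct finite partial chaotic bijections $f_0 \subseteq f_1 \subseteq \cdots$ whose union is $f$, at each stage alternately extending the domain with an image for the smallest unassigned element of $S$, or extending the range with a preimage for the smallest unassigned element of $X$. Chaoticity at each stage confines the new assignment to the complement of a finite union of open intervals in $X$ (or arithmetic configurations in $S$); since $X$ has no isolated points, nonempty open subsets of $X$ are always infinite, so finitely many used values together with the forbidden intervals cannot exhaust the complement. The endpoint hypothesis in Part~(3) similarly prevents $\Q$-midpoint pathologies.

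The main obstacle I anticipate is case~(iii) of the isolation analysis, where the reflection symmetry alone is too weak and one must assemble multiple chains of chaoticity constraints to force the contradiction. The technical difficulty of the back-and-forth lies in the bookkeeping required to guarantee a legal extension at every stage, especially for $S = \Q$ where APs proliferate densely.
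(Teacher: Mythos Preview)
Your proposal has a genuine gap in each direction, and in both cases the missing idea is the same: the $2$-adic structure that the paper exploits.

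\textbf{Necessity.} Your case~(iii) is the heart of the matter, and you give no argument beyond noting the reflection symmetry about $n_0$ and invoking a ``DEGS-style contradiction''; case~(i) for $S=\N$ with $2n_1<n_0$ is likewise only hand-waved. The paper does not proceed by case analysis on the isolation type at all. Instead it proves (Lemmas~2.2--2.3, Proposition~2.4) that every chaotic map on $\N$, $\Z$, or $\Q$ is automatically \emph{binary}: there are no $a,b,c$ with $f(a)\prec f(b)\prec f(c)$ and $\ord_2(b-a)=\ord_2(c-b)$. This is established by an induction showing that $f(a)\prec f(a+d)$ iff $f(a)\prec f(a+td)$ for every odd $t$, which is not a DEGS-type counting argument. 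Once binarity is known, an isolated $f(c)$ (of any of the three types) is killed in one line by taking $c'=c+2^n$ with $n$ large: then $\ord_2(c'-c)$ exceeds $\ord_2(a-c)$ and $\ord_2(b-c)$, forcing $f(c')$ to lie strictly between $f(a)$ and $f(b)$.

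\textbf{Sufficiency.} Your back-and-forth does not work as stated. When you assign an image to the next $s\in S$, the APs in which $s$ is an \emph{endpoint}, i.e.\ $(a,b,s)$ with $a,b$ already placed, do not merely forbid an open interval: they confine $f(s)$ to an open half-line (the $f(a)$-side of $f(b)$). Two such half-line constraints can be incompatible. Concretely, the assignment $f(0)\prec f(2)\prec f(5)\prec f(1)\prec f(6)\prec f(3)$ is chaotic on $D=\{0,1,2,3,5,6\}$ (the only APs are $(0,1,2),(1,2,3),(0,3,6),(1,3,5)$), yet no value of $f(4)$ is legal: the AP $(0,2,4)$ forces $f(4)\prec f(2)$ while the AP $(4,5,6)$ forces $f(4)\succ f(5)\succ f(2)$. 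Domains like this can arise from your ``back'' steps, and you give no mechanism to avoid them. The paper sidesteps the issue by never adding one point at a time: it doubles the domain, setting $S_{n+1}=S_n\cup(S_n+r_n)$ with $\ord_2 r_n$ strictly larger (or, for $\Q$, alternately strictly smaller) than every $\ord_2(a-b)$ with $a,b\in S_n$. Under that hypothesis the extension is always possible (Lemmas~3.2 and~3.7), and the map produced is binary, not merely chaotic.
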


\section{Proof of the `only if' parts of our main theorem}
This section proves the `only if' parts of our main theorem.
Let $S\subset\Q$, and let $(X,\preceq)$ be a countably infinite totally ordered set.
The key to the proof is the observation that
for $S\in\{\N,\Z,\Q\}$, every chaotic map $f\colon S\to X$ has a stronger property concerning the $2$-adic order.

\subsection{Chaotic maps and binary maps}
We begin by generalizing the definition of chaotic maps:
we say that $f\colon S\to X$ is \emph{chaotic} if $f$ is injective and there are no $a,b,c\in S$ such that $f(a)\prec f(b)\prec f(c)$ and $b-a=c-b$.

We then define a property that is stronger than being chaotic.
Recall that the \emph{$2$-adic order} $\ord_2r$ of $r\in\Q^{\times}=\Q\setminus\{0\}$ is defined as the unique $n\in\Z$
such that $2^{-n}r$ can be written as a quotient of odd integers;
$\ord_20$ is defined as $\infty$.
We say that $f\colon S\to X$ is \emph{binary} if $f$ is injective and there are no $a,b,c\in S$ such that $f(a)\prec f(b)\prec f(c)$ and $\ord_2(b-a)=\ord_2(c-b)$.
It is obvious that every binary map $f\colon S\to X$ is chaotic.

\begin{ex}\label{ex:chaotic}
 If $f\colon\{0,1,2,3,4,5,6,7\}\to X$ satisfies
 \[
  f(0)\prec f(4)\prec f(2)\prec f(6)\prec f(1)\prec f(5)\prec f(3)\prec f(7),
 \]
 then $f$ is binary.
 If $f\colon\{0,1,2,3\}\to X$ satisfies
 \[
  f(2)\prec f(3)\prec f(0)\prec f(1),
 \]
 then $f$ is chaotic but not binary.
\end{ex}

\begin{prop}\label{prop:binary_implies_no_isolated_points}
 If $S\in\{\N,\Z,\Q\}$ and there exists a binary bijection $f\colon S\to X$, then $X$ has no isolated points.
\end{prop}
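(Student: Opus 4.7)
The plan is to argue by contradiction: I would suppose $f\colon S\to X$ is binary and $X$ contains an isolated point $p$, and set $q=f^{-1}(p)\in S$. The main idea is to study the image under $f$ of the infinite set $T=\{q+2^k:k\in\Z_{\ge 0}\}\subset S$, exploiting the fact that the differences from $q$ to the elements of $T$ are precisely the nonnegative powers of $2$.

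A pigeonhole step comes first: since $f$ is injective and $f(t)\ne p$ for $t\in T$, at least one of the two sets $K_+=\{k\in\Z_{\ge 0}:f(q+2^k)\succ p\}$ and $K_-=\{k\in\Z_{\ge 0}:f(q+2^k)\prec p\}$ is infinite. The binary condition is invariant under reversing the order on $X$ (one checks this by relabeling a triple $(a,b,c)$ as $(c,b,a)$), so I may reduce to the case where $K_+$ is infinite. In particular $p$ is not the maximum of $X$, which rules out case (ii) of the definition of isolated point, so $p$ admits an immediate successor $x^*\in X$; consequently $f(q+2^k)\succeq x^*$ for every $k\in K_+$.

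The decisive step is to combine this with the binary condition on triples of the form $(q,q^*,q+2^k)$, where $q^*=f^{-1}(x^*)$ and $k\in K_+$. Setting $m^*=q^*-q\ne 0$, I would discard the at most one index $k$ with $q+2^k=q^*$, so that for every remaining $k$ the image $p\prec x^*\prec f(q+2^k)$ is a strict monotone chain in $X$. The binary property then forces $\ord_2(m^*)\ne\ord_2(2^k-m^*)$; but a direct $2$-adic computation gives $\ord_2(2^k-m^*)=\ord_2(m^*)$ whenever $k>\ord_2(m^*)$, yielding the desired contradiction. The only obstacle I anticipate is the bookkeeping needed to arrange an index $k\in K_+$ with $k>\ord_2(m^*)$ and $q+2^k\ne q^*$ simultaneously, but this is immediate from the infinitude of $K_+$.
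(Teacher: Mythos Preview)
Your argument is correct and rests on the same mechanism as the paper's proof: pick $c'\in S$ with $\ord_2(c'-q)$ large (you take $c'=q+2^k$), observe that $f(c')$ must land strictly beyond an immediate neighbour $x^*=f(q^*)$ of the isolated point $p=f(q)$, and then the triple $(q,q^*,c')$ violates the binary condition because $\ord_2(q^*-q)=\ord_2(c'-q^*)$. The only difference is organisational: the paper selects a \emph{single} $c'$ with $\ord_2(c'-q)$ exceeding the $2$-adic orders of the differences to \emph{both} neighbours and then splits into two cases according to which side $f(c')$ falls on, whereas you run a pigeonhole over the whole family $\{q+2^k\}_{k\ge 0}$ to force infinitely many images onto one fixed side and then work with the neighbour on that side; your detour is harmless but not needed, since one suitably chosen $c'$ already suffices.
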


\begin{proof}
 Assume that $\{x\in X\mid f(a)\prec x\prec f(b)\}=\{f(c)\}$ for some $a,b,c\in S$.
 Take $c'\in S\setminus\{c\}$ with $\ord_2(c'-c)>\ord_2(a-c)$ and $\ord_2(c'-c)>\ord_2(b-c)$
 (we can for example take $c'=c+2^n$ for a sufficiently large $n\in\N$).
 Since $f(c)$ is the only element of $X$ between $f(a)$ and $f(b)$,
 it must be the case that either $f(c')\prec f(a)\prec f(c)$ or $f(c)\prec f(b)\prec f(c')$.
 Since $\ord_2(a-c')=\ord_2(c-a)$ and $\ord_2(b-c)=\ord_2(c'-b)$,
 the map $f$ cannot be binary, which is a contradiction.

 In a similar manner, both assuming that $\{x\in X\mid x\succ f(a)\}=\{f(c)\}$ for some $a,c\in S$ and assuming that $\{x\in X\mid x\prec f(a)\}=\{f(c)\}$ for some $a,c\in S$
 lead to a contradiction.
\end{proof}

\begin{rem}
 Proposition~\ref{prop:binary_implies_no_isolated_points} remains valid for any set $S\subset\Q$ with the property that
 for every $a\in S$, the set $\{\ord_2(b-a)\mid b\in S\setminus\{a\}\}$ is unbounded from above.
\end{rem}

\subsection{Chaotic maps are binary if $S\in\{\N,\Z,\Q\}$}
In this subsection, we prove that if $S\in\{\N,\Z,\Q\}$, then every chaotic map $f\colon S\to X$ is in fact binary (Proposition~\ref{prop:chaotic_implies_binary}).
Note that this is not the case for general $S$, as the second example in Example~\ref{ex:chaotic} shows.
We write $\N_+$ for the set of all positive integers.

\begin{lem}\label{lem:1_implies_t}
 Let $f\colon S\to X$ be chaotic and $t\in\N_+$ be odd.
 Suppose that $a\in\Q$ and $d\in\Q^{\times}$ are such that $a+id\in S$ for all $i\in\N$.
 Then $f(a)\prec f(a+d)$ if and only if $f(a)\prec f(a+td)$.
\end{lem}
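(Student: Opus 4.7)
The plan is to induct on the odd integer $t$. Setting $g(i)=f(a+id)$ produces a chaotic injection $g\colon\N\to X$, and by symmetry I may assume without loss of generality that $g(0)\prec g(1)$ and aim to show $g(0)\prec g(t)$. As a preliminary step, I would prove by induction on $i$, using the chaotic property on the consecutive three-term APs $(2i,2i+1,2i+2)$ and $(2i+1,2i+2,2i+3)$, that $g(2i)\prec g(2i+1)$ and $g(2i+2)\prec g(2i+1)$ hold for all $i\ge 0$; that is, odd positions are local maxima and even positions are local minima.

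For the base case $t=3$, I would suppose for contradiction that $g(3)\prec g(0)$. Combining the local structure with the chaotic property on the APs $(0,1,2)$, $(1,2,3)$, $(2,3,4)$, $(0,2,4)$, $(2,4,6)$, and $(0,3,6)$, I would derive successively $g(2)\prec g(3)$, $g(4)\prec g(3)$, $g(2)\prec g(4)$, $g(6)\prec g(4)$, and $g(3)\prec g(6)$, yielding the cycle $g(3)\prec g(6)\prec g(4)\prec g(3)$, which is the desired contradiction.

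For odd $t\ge 5$, assume the lemma for all odd $t'<t$, and suppose for contradiction $g(t)\prec g(0)$. Applying the $t'=3$ case (interpreted as a statement about arbitrary sub-APs inside $\{a+id:i\in\N\}$) at the starting point $0$ with step $td$ yields $g(3t)\prec g(0)$, and hence $g(3t)\prec g(3)$ since the induction hypothesis gives $g(0)\prec g(3)$. The strategy is then to use smaller-$t'$ lemmas on further sub-APs to force a monotone three-term AP violating the chaotic property. For instance, when $t=5$, one derives $g(9)\prec g(3)$ (from the $t'=3$ lemma applied at $3$ with step $2$, using $g(5)\prec g(0)\prec g(3)$) and $g(15)\prec g(9)$ (from the $t'=3$ lemma at $0$ with steps $5$ and $3$, giving $g(15)\prec g(0)\prec g(9)$); the chaotic property on $(3,9,15)$ is then violated. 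An analogous construction should work for each odd $t\ge 5$, though the specific choice of sub-APs depends on the arithmetic of $t$---for example on the divisibility of $t-1$ by $3$ and on the parity of $(t+1)/2$. I expect the main obstacle to be making this argument uniform across all odd $t$, which likely requires either delicate case analysis on $t$ modulo small numbers or a subtler inductive scheme that treats all odd $t\ge 5$ simultaneously.
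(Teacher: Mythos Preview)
Your preliminary observation about the alternating pattern is correct, and your arguments for $t=3$ and $t=5$ are valid. The gap is exactly where you locate it yourself: you do not have an inductive step that works for arbitrary odd $t$. The remark that ``the specific choice of sub-APs depends on the arithmetic of $t$'' and the anticipated case analysis on $t$ modulo small numbers is a sign that the scheme, as stated, is not a proof but a collection of examples. Strong induction on $t'<t$ together with the freedom to choose sub-APs certainly gives many inequalities, but you have not exhibited a uniform recipe that produces a monotone three-term AP for every odd $t\ge 5$; without that, the induction does not close.

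The paper sidesteps this difficulty with a single idea that makes the inductive step uniform. Assuming the lemma for $t$ and aiming at $t+2$, one does \emph{not} try to argue near $a$; instead one shifts far forward to $b=a+2(t+2)d$. The alternating pattern gives $f(b)\prec f(b+d)$, and the inductive hypothesis applied at $b$ (not at $a$) yields $f(b)\prec f(b+td)$. One then makes a single binary case split on whether $f(b+td)\prec f(b+(t+2)d)$ or $f(b+td)\succ f(b+(t+2)d)$. In the first case, two applications of chaoticity to the step-$(t+2)d$ AP through $a,\,a+(t+2)d,\,b,\,b+(t+2)d$ force $f(a)\prec f(a+(t+2)d)$. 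In the second case, chaoticity along the step-$2d$ AP from $b+(t+2)d$ down to $a+(t+2)d$ forces $f(a+(t+2)d)\succ f(a+td)$, and one finishes with the inductive hypothesis $f(a)\prec f(a+td)$. This two-case argument is independent of any arithmetic of $t$, which is precisely what your approach is missing.
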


\begin{proof}
 The proof proceeds by induction on $t$.
 The lemma is trivial for $t=1$.
 Suppose that the lemma is true for an odd positive integer $t$.
 In order to prove that the lemma is true for $t+2$, we only have to prove that $f(a)\prec f(a+d)$ implies $f(a)\prec f(a+(t+2)d)$.

 Set $b=a+2(t+2)d\in S$.
 Since $f$ is chaotic and $f(a)\prec f(a+d)$, we have
 \[
  f(a)\prec f(a+d)\succ f(a+2d)\prec\dots\succ f(b)\prec f(b+d).
 \]
 It follows from the inductive hypothesis applied to $b$ and $d$ that $f(b)\prec f(b+td)$
 (note that $b+id=a+(2t+4+i)d\in S$ for all $i\in\N$).

 If $f(b+td)\prec f(b+(t+2)d)$, then we have $f(b)\prec f(b+td)\prec f(b+(t+2)d)$, and so
 \[
  f(b+(t+2)d)\succ f(b)\prec f(b-(t+2)d)=f(a+(t+2)d)\succ f(a),
 \]
 as required.

 If $f(b+td)\succ f(b+(t+2)d)$, then we have
 \begin{align*}
  f(b+(t+2)d)&\prec f(b+td)\succ f(b+(t-2)d)\prec\dots\\
  &\prec f(b-(t+2)d)=f(a+(t+2)d)\succ f(a+td).
 \end{align*}
 Since $f(a)\prec f(a+td)$, we may conclude that $f(a)\prec f(a+(t+2)d)$.
\end{proof}

\begin{lem}\label{lem:1_implies_st}
 Let $f\colon S\to X$ be chaotic, $s\in\N$ be even, and $t\in\N_+$ be odd.
 Suppose that $a\in\Q$ and $d\in\Q^{\times}$ are such that $a+id\in S$ for all $i\in\N$.
 Then $f(a)\prec f(a+d)$ if and only if $f(a+sd)\prec f(a+td)$.
\end{lem}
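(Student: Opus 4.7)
The plan is to establish the forward direction $f(a)\prec f(a+d)\Rightarrow f(a+sd)\prec f(a+td)$; the reverse direction will then follow by applying the same argument to the reversed order on $X$ (which keeps $f$ chaotic), or equivalently by contraposition together with injectivity of $f$.

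The first step is to use chaoticity to establish a strict zigzag for the sequence $f(a),f(a+d),f(a+2d),\dots$: no three consecutive terms can be monotone, because they would form a three-term arithmetic progression with common difference $\pm d$. Starting from $f(a)\prec f(a+d)$ and alternating, this yields
\[
 f(a)\prec f(a+d)\succ f(a+2d)\prec f(a+3d)\succ\cdots,
\]
so that $f(a+ed)\prec f(a+(e+1)d)$ for every even $e\in\N$ and $f(a+od)\succ f(a+(o+1)d)$ for every odd $o\in\N_+$.

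I would then split into two cases according to the sign of $t-s$, which is a nonzero odd integer since $s$ is even and $t$ is odd. If $s<t$, I apply Lemma~\ref{lem:1_implies_t} with base point $a':=a+sd$, common difference $d$, and odd positive integer $t':=t-s$: the zigzag at the even index $s$ supplies the hypothesis $f(a')\prec f(a'+d)$, and the lemma gives $f(a+sd)=f(a')\prec f(a'+t'd)=f(a+td)$. If $s>t$, I apply Lemma~\ref{lem:1_implies_t} with base point $a':=a+td$, common difference $d$, and odd positive integer $t':=s-t$: the zigzag at the odd index $t$ gives $f(a')\succ f(a'+d)$, and the lemma (in its contrapositive form) yields $f(a+td)=f(a')\succ f(a'+t'd)=f(a+sd)$, i.e., $f(a+sd)\prec f(a+td)$. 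In both cases the assumption $a+id\in S$ for all $i\in\N$ transfers to the shifted base $a'$ automatically.

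I do not foresee a real obstacle; the only conceptual point is to realize that Lemma~\ref{lem:1_implies_t} should be invoked at the shifted base $a+\min(s,t)\cdot d$ rather than at $a$ itself, after which the parity identity (even minus odd is odd) makes the required odd integer $|t-s|$ available for free.
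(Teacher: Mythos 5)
Your proposal is correct and follows essentially the same route as the paper: establish the alternating zigzag $f(a)\prec f(a+d)\succ f(a+2d)\prec\cdots$ from chaoticity, then apply Lemma~\ref{lem:1_implies_t} at the shifted base point $a+\min(s,t)d$ with the odd difference $|t-s|$, splitting into the cases $s<t$ and $s>t$. The reduction to the forward implication via order reversal (or contraposition plus injectivity) is exactly the justification behind the paper's opening ``it suffices to prove'' step.
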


\begin{proof}
 It suffices to prove that $f(a)\prec f(a+d)$ implies $f(a+sd)\prec f(a+td)$.
 Since $f$ is chaotic, we have
 \[
  f(a)\prec f(a+d)\succ f(a+2d)\prec f(a+3d)\succ\dotsb.
 \]
 If $s<t$, then Lemma~\ref{lem:1_implies_t} and the fact that $f(a+sd)\prec f((a+sd)+d)$ imply that $f(a+sd)\prec f((a+sd)+(t-s)d)=f(a+td)$.
 If $t<s$, then Lemma~\ref{lem:1_implies_t} and the fact that $f(a+td)\succ f((a+td)+d)$ imply that $f(a+td)\succ f((a+td)+(s-t)d)=f(a+sd)$.
\end{proof}

\begin{prop}\label{prop:chaotic_implies_binary}
 If $S\in\{\N,\Z,\Q\}$ and $f\colon S\to X$ is chaotic, then $f$ is binary.
\end{prop}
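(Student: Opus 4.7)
Suppose for contradiction that $f$ is chaotic but not binary: there exist $a,b,c\in S$ with $f(a)\prec f(b)\prec f(c)$ and $n:=\ord_2(b-a)=\ord_2(c-b)$. Injectivity of $f$ forces $b\neq a$ and $c\neq b$, so $n$ is a finite integer.

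The idea is to exhibit $a,b,c$ as three terms of an arithmetic progression $a_0,a_0+d,a_0+2d,\dots$ lying in $S$, with step $d>0$ chosen so that both $(b-a)/d$ and $(c-b)/d$ are \emph{odd} integers. For $S\in\{\N,\Z\}$ this is achieved by $d=2^n\in\N_+$; for $S=\Q$ one writes $b-a=2^n\alpha/\gamma$ and $c-b=2^n\beta/\delta$ with $\alpha,\beta,\gamma,\delta$ odd, and takes $d=2^n/(\gamma\delta)>0$. Setting $a_0:=\min(a,b,c)$ in the usual order of $\Q$ gives $a_0\in S$ and $a_0+id\in S$ for every $i\in\N$, which is the hypothesis needed for Lemma~\ref{lem:1_implies_st}.

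Writing $a=a_0+m_a d$, $b=a_0+m_b d$, $c=a_0+m_c d$ with $m_a,m_b,m_c\in\N$ (at least one being $0$), the choice of $d$ makes $m_b-m_a$ and $m_c-m_b$ odd while $m_c-m_a$ is even; hence $m_a\equiv m_c\not\equiv m_b\pmod{2}$, and in particular $\{m_a,m_b,m_c\}$ contains both an even and an odd element. I then split on whether $f(a_0)\prec f(a_0+d)$ or $f(a_0)\succ f(a_0+d)$. In each of the resulting configurations --- indexed by which of $m_a,m_b,m_c$ is zero (the parity constraint allows only three possibilities) and by the sign of $f(a_0+d)-f(a_0)$ --- I select an even $s$ and an odd $t$ from $\{m_a,m_b,m_c\}$ and apply Lemma~\ref{lem:1_implies_st} to obtain an inequality among $f(a),f(b),f(c)$ that contradicts $f(a)\prec f(b)\prec f(c)$.

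The only real obstacle is the bookkeeping of this final case analysis, but the parity observation above guarantees that a suitable pair $(s,t)$ producing the desired contradiction exists in every configuration, so each of the (at most six) cases is dispatched by one appeal to Lemma~\ref{lem:1_implies_st}.
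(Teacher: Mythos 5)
Your plan is correct and follows essentially the same route as the paper's proof: normalize $a,b,c$ onto an arithmetic progression with step $d$ (an appropriate odd divisor of $2^n$) so that $(b-a)/d$ and $(c-b)/d$ are odd integers, anchor at $\min\{a,b,c\}$, and play two applications of Lemma~\ref{lem:1_implies_st} against each other using the parity pattern $m_a\equiv m_c\not\equiv m_b\pmod 2$. The final case analysis you defer does go through exactly as you predict (indeed, the split on the sign of $f(a_0+d)-f(a_0)$ is not even needed: the pairs $(s,t)$ coming from $\{m_a,m_b\}$ and $\{m_b,m_c\}$ already force contradictory conclusions about whether $f(a_0)\prec f(a_0+d)$).
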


\begin{proof}
 Suppose that $f$ is not binary.
 Then there exist $a,b,c\in S$ such that $f(a)\prec f(b)\prec f(c)$ and $\ord_2(b-a)=\ord_2(c-b)=n$, say.
 Take the smallest $m\in\N_+$ such that $2^{-n}m(b-a)$ and $2^{-n}m(c-b)$ are both (necessarily odd) integers.
 Put $d=2^n/m$.

 If $b=\min\{a,b,c\}$, then applying Lemma~\ref{lem:1_implies_st} to $b$ and $d$ shows that
 $f(b)\succ f(a)=f(b+2^{-n}m(a-b)d)$ implies $f(b)\succ f(b+d)$,
 whereas $f(b)\prec f(c)=f(b+2^{-n}m(c-b)d)$ implies $f(b)\prec f(b+d)$;
 this is a contradiction.

 If $a=\min\{a,b,c\}$, then applying Lemma~\ref{lem:1_implies_st} to $a$ and $d$ shows that
 $f(a)\prec f(b)=f(a+2^{-n}m(b-a)d)$ implies $f(a)\prec f(a+d)$,
 whereas $f(b)=f(a+2^{-n}m(b-a)d)\prec f(c)=f(a+2^{-n}m(c-a)d)$ implies $f(a+d)\prec f(a)$;
 this is a contradiction.

 If $c=\min\{a,b,c\}$, then a similar argument leads us to a contradiction.
\end{proof}

Propositions~\ref{prop:binary_implies_no_isolated_points} and \ref{prop:chaotic_implies_binary} show that
if $S\in\{\N,\Z,\Q\}$ and there exists a chaotic bijection $f\colon S\to X$, then $X$ has no isolated points.
We may now complete the proof of the `only if' parts of our main theorem by noticing the following simple proposition:

\begin{prop}
 If $f\colon\Q\to X$ is a chaotic bijection, then either $X$ does not have a maximum or $X$ does not have a minimum.
\end{prop}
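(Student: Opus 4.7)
The plan is to argue by contradiction: assume $X$ has both a maximum $M$ and a minimum $m$, and exhibit a monotone three-term arithmetic progression under $f$. The key asset here is that $\Q$ is closed under midpoints, which is precisely what $\N$ and $\Z$ lack and what makes the statement special to the rational case.

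Concretely, I would set $a=f^{-1}(m)$ and $c=f^{-1}(M)$, and then let $b=(a+c)/2\in\Q$, which is well-defined since $\Q$ is $2$-divisible. Because $X$ is countably infinite it has at least three elements, so $a$, $b$, $c$ are pairwise distinct rationals (if $a=c$ then $m=M$ forces $|X|=1$; and $b$ differs from $a$ and $c$ because $a\ne c$). Injectivity of $f$ then gives $f(b)\ne m$ and $f(b)\ne M$, so by definition of maximum and minimum we get the strict chain $f(a)=m\prec f(b)\prec M=f(c)$. Since $b-a=c-b$ by construction, this contradicts chaoticity of $f$.

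The only subtlety worth checking is the degenerate case $|X|\le 2$, which is ruled out by the assumption that $X$ is countably infinite, so the three elements $m$, $f(b)$, $M$ are genuinely distinct. There is no real obstacle: the whole argument rests on the single algebraic fact that any two rationals have a rational midpoint, together with the observation that in a set with both a maximum and a minimum, every other element lies strictly between them. No appeal to the binary refinement from the previous propositions is needed.
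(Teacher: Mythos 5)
Your proof is correct and is essentially identical to the paper's: both take the preimages of the maximum and minimum, form their rational midpoint, and observe that its image must lie strictly between the two extremes, producing a monotone three-term arithmetic progression. The extra care you take about degeneracy (using that $X$ is infinite) is harmless and the paper simply omits it.
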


\begin{proof}
 If $X$ has both a maximum and a minimum, say $f(a)$ and $f(b)$, then setting $c=(a+b)/2$,
 we have $f(b)\prec f(c)\prec f(a)$ and $c-b=a-c$, a contradiction.
\end{proof}

\section{Proof of the `if' parts of our main theorem}
This section proves the `if' parts of our main theorem.
For $S\subset\Q$ and $r\in\Q$, we write $S+r=\{a+r\mid a\in S\}$.

\subsection{Construction of binary maps from $\N$ and $\Z$}
\begin{lem}\label{lem:S_and_S+r}
 Let $S\subset\Q$ and $r\in\Q^{\times}$.
 \begin{enumerate}
  \item If $\ord_2(a-b)\ne\ord_2r$ for all $a,b\in S$, then $S\cap(S+r)=\emptyset$.
  \item Suppose that $\ord_2(a-b)<\ord_2r$ for all distinct $a,b\in S$.
   Then for distinct $a,b\in S\cup(S+r)$,
   we have $\ord_2(a-b)\le\ord_2r$ with equality if and only if $a-b=\pm r$.
  \item Suppose that $\ord_2(a-b)>\ord_2r$ for all $a,b\in S$.
   Then for distinct $a,b\in S\cup(S+r)$,
   we have $\ord_2(a-b)\ge\ord_2r$ with equality if and only if either $a\in S$ and $b\in S+r$ or $a\in S+r$ and $b\in S$.
 \end{enumerate}
\end{lem}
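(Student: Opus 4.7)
The plan is to prove all three parts by direct case analysis on whether the points $a,b$ lie in $S$ or in $S+r$, with the ultrametric inequality
\[
 \ord_2(x+y)\ge\min(\ord_2 x,\ord_2 y),\qquad\text{with equality if }\ord_2 x\ne\ord_2 y,
\]
as the only nontrivial input. No induction or clever construction should be needed.

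For part (1), I would argue directly: if $x\in S\cap(S+r)$, write $x=a$ and $x=b+r$ with $a,b\in S$, so that $a-b=r$ and hence $\ord_2(a-b)=\ord_2 r$, contradicting the hypothesis. For part (2), take distinct $a,b\in S\cup(S+r)$ and split into four cases. When $a,b$ are both in $S$, or both in $S+r$, the difference $a-b$ lies in $S-S$ (translation cancels) so $\ord_2(a-b)<\ord_2 r$ by hypothesis. When $a\in S$ and $b=b'+r\in S+r$, write $a-b=(a-b')-r$: if $a=b'$ then $a-b=-r$ and we have equality, and otherwise $\ord_2(a-b')<\ord_2 r=\ord_2(-r)$, so the ultrametric equality case gives $\ord_2(a-b)=\ord_2(a-b')<\ord_2 r$. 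The symmetric case $a\in S+r$, $b\in S$ mirrors this, with equality iff $a-b=r$.

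For part (3), first note that the hypothesis forces $S\cap(S+r)=\emptyset$ by part (1) (since the hypothesis rules out $\ord_2(a-b)=\ord_2 r$ for distinct $a,b\in S$, and the case $a=b$ is automatic). Then carry out the same four-case analysis: when $a,b$ lie in the same side ($S$ or $S+r$), the translation cancels and the hypothesis gives $\ord_2(a-b)>\ord_2 r$ (strict). When $a\in S$ and $b=b'+r\in S+r$, then $a\ne b'$ (otherwise $a=b'\in S\cap(S+r)$), so $\ord_2(a-b')>\ord_2 r$ while $\ord_2(-r)=\ord_2 r$; the ultrametric equality case then yields $\ord_2(a-b)=\ord_2((a-b')-r)=\ord_2 r$ exactly. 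The case $a\in S+r$, $b\in S$ is symmetric.

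I do not anticipate a genuine obstacle; the only thing to watch is bookkeeping, namely making sure in part (3) that $S$ and $S+r$ are disjoint (so the case split is unambiguous) and that in the cross cases one does not accidentally land in the equality-not-guaranteed regime of the ultrametric inequality. Both are handled by invoking part (1) and by noting that the hypotheses in (2) and (3) place $\ord_2(a-b')$ strictly on one side of $\ord_2 r$.
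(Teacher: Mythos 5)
Your proof is correct and follows essentially the same route as the paper's: a direct case split on whether $a,b$ lie in $S$ or $S+r$, reducing the cross case to $a-(b-r)$ and applying the ultrametric property of $\ord_2$. The paper's version is merely terser (it dismisses (1) as obvious and leaves the disjointness of $S$ and $S+r$ in part (3) implicit, which you sensibly make explicit).
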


\begin{proof}
 \begin{enumerate}
  \item 
   Obvious.
  \item
   Let $a,b\in S\cup(S+r)$ be distinct.
   If either $a,b\in S$ or $a,b\in S+r$, then we obviously have $\ord_2(a-b)<\ord_2r$.
   Therefore by symmetry we may assume that $a\in S$ and $b\in S+r$.
   If $a=b-r$, then we clearly have $\ord_2(a-b)=\ord_2r$;
   otherwise $a$ and $b-r$ are distinct elements of $S$ and so $\ord_2(a-(b-r))<\ord_2r$, which implies that $\ord_2(a-b)<\ord_2r$.
  \item
   Let $a,b\in S\cup(S+r)$ be distinct.
   If either $a,b\in S$ or $a,b\in S+r$, then we obviously have $\ord_2(a-b)>\ord_2r$.
   Therefore by symmetry we may assume that $a\in S$ and $b\in S+r$.
   Then we have $\ord_2(a-(b-r))>\ord_2r$, which implies that $\ord_2(a-b)=\ord_2r$.\qedhere
 \end{enumerate}
\end{proof}

In writing the proofs that follow, it is convenient to make the set-theoretical identification of a map $\varphi\colon A\to B$ with the its graph $\{(a,b)\in A\times B\mid\varphi(a)=b\}$.
Thus for maps $\varphi_1\colon A_1\to B_1$ and $\varphi_2\colon A_2\to B_2$,
the inclusion $\varphi_1\subset\varphi_2$ means that $A_1$ is a subset of $A_2$ and that the restriction of $\varphi_2$ to $A_1$ is $\varphi_1$;
the union $\varphi_1\cup\varphi_2$ represents a map from $A_1\cup A_2$ to $B_1\cup B_2$,
provided that $\varphi_1(a)=\varphi_2(a)$ for all $a\in A_1\cap A_2$.

\begin{lem}\label{lem:add_odd}
 Let $(X,\preceq)$ be a totally ordered set without isolated points.
 Let $S\subset\Q$ be finite and $r\in\Q^{\times}$ be such that $\ord_2(a-b)<\ord_2r$ for all distinct $a,b\in S$.
 If $f\colon S\to X$ is binary, $\tilde{S}=S\cup(S+r)$, and $x\in X\setminus\Ima f$,
 then there exists a binary map $\tilde{f}\colon\tilde{S}\to X$ with $f\subset\tilde{f}$ and $x\in\Ima\tilde{f}$.
\end{lem}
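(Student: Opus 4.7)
The plan is to reduce the binary condition on $\tilde{f}$ to a purely local \emph{adjacency} requirement and then to construct such a $\tilde{f}$ passing through $x$ using the hypothesis that $X$ has no isolated points. The adjacency requirement will state: for each $u \in S$, the values $f(u) = \tilde{f}(u)$ and $\tilde{f}(u+r)$ are consecutive in the finite set $\tilde{f}(\tilde{S}) \subset X$.

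Let $\pi \colon \tilde{S} \to S$ denote the projection $u \mapsto u$, $u + r \mapsto u$. By Lemma~\ref{lem:S_and_S+r}(2), for distinct $a, b \in \tilde{S}$ one has $\ord_2(a-b) = \ord_2 r$ exactly when $\pi(a) = \pi(b)$, and otherwise $\ord_2(a-b) = \ord_2(\pi(a) - \pi(b)) < \ord_2 r$. Classifying triples $(a, b, c)$ in $\tilde{S}$ with $\ord_2(b-a) = \ord_2(c-b)$: those with three distinct projections give the ordinary binary constraint on $(f(\pi(a)), f(\pi(b)), f(\pi(c)))$; those with $\pi(a) = \pi(c) \ne \pi(b)$ automatically satisfy the equal-$\ord_2$ hypothesis and translate to ``$\tilde{f}(b)$ is not strictly between $\tilde{f}(\pi(a))$ and $\tilde{f}(\pi(a)+r)$''---precisely the adjacency condition. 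The remaining cases impose nothing: $\pi(a) = \pi(b)$ or $\pi(b) = \pi(c)$ produces mismatched $2$-adic orders, and $\pi(a) = \pi(b) = \pi(c)$ is impossible since each fibre of $\pi$ has only two elements. A short case analysis will further show that once adjacency holds, the relation $\tilde{f}(a) \prec \tilde{f}(b)$ for $a, b \in \tilde{S}$ with $\pi(a) \ne \pi(b)$ is equivalent to $f(\pi(a)) \prec f(\pi(b))$: adjacency forces the pairs $\{f(u), \tilde{f}(u+r)\}$ and $\{f(v), \tilde{f}(v+r)\}$ for distinct $u, v \in S$ to be arranged with one entirely below the other (placing one element of the first pair ``between'' the second pair would violate the adjacency of the second). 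Hence the ``distinct-projection'' constraint on $\tilde{f}$ is inherited from the binary property of $f$ on $S$, and the full binary condition on $\tilde{f}$ reduces to the adjacency condition.

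For the construction, I sort $S = \{u_1, \dots, u_n\}$ so that $f(u_1) \prec \dots \prec f(u_n)$ and define the open gaps $G_0 = \{y \in X : y \prec f(u_1)\}$, $G_i = \{y \in X : f(u_i) \prec y \prec f(u_{i+1})\}$ for $1 \le i \le n-1$, and $G_n = \{y \in X : y \succ f(u_n)\}$. A side function $\sigma \colon S \to \{+, -\}$ determines placements: $\sigma_j = +$ puts $\tilde{f}(u_j + r)$ in $G_j$, while $\sigma_j = -$ puts it in $G_{j-1}$; when an internal gap receives both contributions I order them ``$+$'' below ``$-$'', which makes adjacency automatic. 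The ``no isolated points'' hypothesis forbids two consecutive empty gaps (such a configuration would render some $f(u_i)$---or, at the extremes, $f(u_1)$ or $f(u_n)$---isolated), so the forcings from empty gaps on $\sigma$ do not conflict, and every non-empty gap is infinite. To guarantee $x \in \Ima \tilde{f}$, I let $i^*$ be the unique index with $x \in G_{i^*}$ (non-empty since it contains $x$) and choose $\sigma$ to route some $\tilde{f}(u_j + r)$ into $G_{i^*}$---namely $\sigma_1 = -$ if $i^* = 0$, $\sigma_n = +$ if $i^* = n$, and $\sigma_{i^*} = +$ otherwise, each compatible with any forcing since $G_{i^*}$ is non-empty---setting that value equal to $x$; the remaining $\tilde{f}(u_j + r)$ are chosen freely from their assigned infinite gaps, avoiding $x$ and one another. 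The main obstacle is establishing the reduction to the adjacency condition cleanly; once that reduction is in place, the construction is routine thanks to the infinitude of every non-empty gap guaranteed by the hypothesis.
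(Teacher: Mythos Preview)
Your approach is essentially the paper's: both arguments make $\tilde{f}$ binary by arranging that each pair $\{f(u),\tilde{f}(u+r)\}$ is uninterleaved with every other pair, and then reduce the binary check on $\tilde{S}$ to that on $S$. The paper phrases the key property via the equivalence relation $a\sim b\iff a-b\in\{0,\pm r\}$ and the observation that $\tilde{f}(a)\prec\tilde{f}(b)\iff\tilde{f}(a')\prec\tilde{f}(b')$ whenever $a\sim a'\not\sim b\sim b'$; your ``adjacency'' formulation is the same thing. For the construction the paper simply places $\tilde{f}(a_i+r)$ inductively in $(f(a_{i-1}),f(a_{i+1}))\setminus\{f(a_i)\}$, above $\tilde{f}(a_{i-1}+r)$, picking $x$ at the first opportunity; your side-function $\sigma$ is a more explicit bookkeeping of the same placement.

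One point deserves an extra line of justification. When $1\le i^*\le n-1$, you set $\sigma_{i^*}=+$ and $\tilde{f}(u_{i^*}+r)=x$. If additionally $G_{i^*+1}=\emptyset$, then $\sigma_{i^*+1}=-$ is forced and your ``$+$ below $-$'' rule requires $\tilde{f}(u_{i^*+1}+r)\in(x,f(u_{i^*+1}))$. Saying this value is ``chosen freely from its assigned infinite gap'' is not enough: you must know that the sub-interval above $x$ is nonempty. It is, because if $(x,f(u_{i^*+1}))=\emptyset$ then $f(u_{i^*+1})$ has immediate predecessor $x$, and $G_{i^*+1}=\emptyset$ gives it either an immediate successor $f(u_{i^*+2})$ or makes it the maximum of $X$; either way $f(u_{i^*+1})$ would be isolated. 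With that remark added, your argument is complete.
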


\begin{proof}
 Take the enumeration $a_1,\dots,a_m$ of $S$ so that $f(a_1)\prec\dots\prec f(a_m)$.
 We define an injection $\tilde{f}\colon\tilde{S}\to X$ with $f\subset\tilde{f}$ and $x\in\Ima\tilde{f}$
 by inductively defining $\tilde{f}(a_i+r)$ as an element of $X$
 such that $f(a_{i-1})\prec\tilde{f}(a_i+r)$, $\tilde{f}(a_{i-1}+r)\prec\tilde{f}(a_i+r)$, $\tilde{f}(a_i+r)\prec f(a_{i+1})$, and
 $\tilde{f}(a_i+r)\ne f(a_i)$ for $i=1,\dots,m$
 (ignore any condition in which $0$ or $m+1$ appears as a subscript),
 using the assumption that $X$ has no isolated points
 and choosing $x$ as $\tilde{f}(a_i+r)$ at the earliest opportunity.

 It remains to prove that $\tilde{f}$ is binary.
 For ease of notation, we define an equivalence relation $\sim$ on $\tilde{S}$
 by setting $a\sim b$ if and only if $a-b\in\{0,\pm r\}$;
 Lemma~\ref{lem:S_and_S+r} (2) implies that this is indeed an equivalence relation because
 $\ord_2(\pm2r)=\ord_2r+1>\ord_2r$.
 Note that for $a,a',b,b'\in\tilde{S}$ with $a\sim a'\not\sim b\sim b'$, we have $\tilde{f}(a)\prec\tilde{f}(b)$ if and only if $\tilde{f}(a')\prec\tilde{f}(b')$.
 Suppose that $\tilde{f}$ is not binary.
 Then we may find $a,b,c\in\tilde{S}$ such that $\tilde{f}(a)\prec\tilde{f}(b)\prec\tilde{f}(c)$ and $\ord_2(b-a)=\ord_2(c-b)=n$, say.
 Lemma~\ref{lem:S_and_S+r} (2) shows that $n\le\ord_2r$ and moreover that $n<\ord_2r$
 because $a\sim b\sim c$ would contradict the fact that $a$, $b$, $c$ are distinct elements of $\tilde{S}$.
 Therefore if we choose $a',b',c'\in S$ so that $a\sim a'$, $b\sim b'$, $c\sim c'$, then
 $a\sim a'\not\sim b\sim b'\not\sim c\sim c'$ and so $\tilde{f}(a')\prec\tilde{f}(b')\prec\tilde{f}(c')$ and $\ord_2(b'-a')=\ord_2(b-a)=\ord_2(c-b)=\ord_2(c'-b')$;
 but this is a contradiction because the restriction of $\tilde{f}$ to $S$ is equal to the binary map $f$.
 Hence we have proved that $\tilde{f}$ is binary.
\end{proof}

\begin{prop}\label{prop:construction_N,Z}
 If $S\in\{\N,\Z\}$, and $(X,\preceq)$ is a countably infinite totally ordered set without isolated points,
 then there exists a binary bijection $f\colon S\to X$.
\end{prop}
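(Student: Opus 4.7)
My plan is to construct $f$ as an increasing union $f=\bigcup_n f_n$ of finite binary maps $f_n\colon S_n\to X$ whose domains $S_0\subset S_1\subset\cdots$ exhaust $S$, obtaining each $f_{n+1}$ from $f_n$ by a single application of Lemma~\ref{lem:add_odd}. Fix an enumeration $x_1,x_2,\ldots$ of $X$, begin with $S_0=\{0\}\subset S$ and $f_0(0)=x_1$, and at step $n\ge 1$ set $S_n=S_{n-1}\cup(S_{n-1}+r_n)$ for a carefully chosen $r_n\in\Q^{\times}$ with $\ord_2 r_n=n-1$. When invoking Lemma~\ref{lem:add_odd} at step $n$, I will feed it the element of $X\setminus\Ima f_{n-1}$ of smallest index, so that a straightforward induction on $i$ shows every $x_i$ eventually enters the image.

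The choice of $r_n$ is what distinguishes the two cases. For $S=\N$, I take $r_n=2^{n-1}$, so $S_n=\{0,1,\ldots,2^n-1\}$ and $\bigcup_n S_n=\N$. For $S=\Z$, I alternate signs, for example $r_n=(-1)^{n+1}2^{n-1}$; a short direct computation then shows that each $S_n$ is a block of $2^n$ consecutive integers with $\min S_n\to-\infty$ and $\max S_n\to+\infty$, so $\bigcup_n S_n=\Z$. In both cases $S_{n-1}$ is a run of $2^{n-1}$ consecutive integers, so any two distinct elements of $S_{n-1}$ differ by an integer in $\{1,\ldots,2^{n-1}-1\}$, whose $2$-adic order is at most $n-2$, strictly less than $\ord_2 r_n=n-1$. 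This is precisely the hypothesis required by Lemma~\ref{lem:add_odd}.

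With the chain $(f_n)$ in hand, $f=\bigcup_n f_n$ is a well-defined injection $S\to X$ because each $f_n$ is an injection and the graphs form a chain; it is surjective by the bookkeeping described above; and it is binary because any putative violating triple $f(a)\prec f(b)\prec f(c)$ with $\ord_2(b-a)=\ord_2(c-b)$ would already lie inside some $f_n$, contradicting the binarity of $f_n$.

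The hard part is balancing the two competing roles of $r_n$: its $2$-adic order must be large enough that Lemma~\ref{lem:add_odd} applies, yet the family $\{r_n\}$ must itself be chosen so that the $S_n$ exhaust $S$. The doubling choice $|r_n|=2^{n-1}$ handles the $2$-adic bookkeeping, and the alternation of signs then extends the construction from $\N$ to $\Z$; once this scheme is in place, everything else reduces to routine verifications.
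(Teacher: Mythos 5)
Your proposal is correct and follows essentially the same route as the paper: the same doubling sets $S_n=S_{n-1}\cup(S_{n-1}+r_n)$ with $|r_n|$ a power of $2$ (signs alternating for $\Z$), repeated application of Lemma~\ref{lem:add_odd}, and the same bookkeeping for surjectivity. The only difference is a harmless index shift in the definition of $r_n$.
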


\begin{proof}
 Define a sequence $r_0,r_1,\dots\in S$ by
 \[
  r_n=
  \begin{cases}
   2^n&\text{if $S=\N$};\\
   (-2)^n&\text{if $S=\Z$},
  \end{cases}
 \]
 and set $S_n=\{\sum_{i\in A}r_i\mid A\subset\{0,\dots,n-1\}\}$ for $n\in\N$.
 Note that for $n\in\N$, since
 \[
  S_n=
  \begin{cases}
   [0,2^n-1]\cap\N&\text{if $S=\N$};\\
   [-2(2^n-1)/3,(2^n-1)/3]\cap\Z&\text{if $S=\Z$ and $n$ is even};\\
   [-2(2^{n-1}-1)/3,(2^{n+1}-1)/3]\cap\Z&\text{if $S=\Z$ and $n$ is odd},
  \end{cases}
 \]
 we have $\ord_2(a-b)<n=\ord_2r_n$ for all distinct $a,b\in S_n$.
 Fixing a bijection $g\colon\N\to X$, we shall inductively construct binary maps $f_n\colon S_n\to X$ for $n\in\N$ so that $f_0\subset f_1\subset\dotsb$.

 Define a binary map $f_0$ from $S_0=\{0\}$ to $X$ by setting $f_0(0)=g(0)$.
 Let $n\in\N$, and suppose that $f_n\colon S_n\to X$ has been constructed.
 Take the smallest $k\in\N$ with $g(k)\notin\Ima f_n$, and use Lemma~\ref{lem:add_odd} to
 construct a binary map $f_{n+1}\colon S_{n+1}\to X$ with $f_n\subset f_{n+1}$ and $g(k)\in\Ima f_{n+1}$.

 Define $f=\bigcup_{n=0}^{\infty}f_n$.
 Then $f$ is a binary map from $\bigcup_{n=0}^{\infty}S_n=S$ to $X$.
 Since the construction ensures that $g(0),\dots,g(k)\in\Ima f_k$ for all $k\in\N$,
 it follows that $f$ is surjective.
\end{proof}

\subsection{Construction of binary maps from $\Q$}
We write $\Z_{(2)}=\{r\in\Q\mid\ord_2r\ge0\}$.

\begin{lem}\label{lem:Z_2_binary_representation}
 Let $n\in\N$ and $q_0,\dots,q_{n-1}\in\Z_{(2)}$ with $\ord_2q_i=i$ for all $i=0,\dots,n-1$.
 Then for every $q\in\Z_{(2)}$ there exists a unique subset $A$ of $\{0,\dots,n-1\}$ such that $\ord_2(q-\sum_{i\in A}q_i)\ge n$.
\end{lem}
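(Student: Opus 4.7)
The plan is to argue by induction on $n$. The base case $n=0$ is immediate, since every $q\in\Z_{(2)}$ satisfies $\ord_2 q\ge 0$ and $\emptyset$ is the only subset of $\emptyset$. For the inductive step I would assume the statement for $n$ and consider $q_0,\dots,q_n\in\Z_{(2)}$ with $\ord_2 q_i=i$.

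For the existence half, I would apply the inductive hypothesis to $q_0,\dots,q_{n-1}$ to obtain $A_0\subset\{0,\dots,n-1\}$ with $q':=q-\sum_{i\in A_0}q_i$ satisfying $\ord_2 q'\ge n$. If in fact $\ord_2 q'\ge n+1$, then $A=A_0$ works; otherwise $\ord_2 q'=n=\ord_2 q_n$, and then $A=A_0\cup\{n\}$ works, because two elements of $\Z_{(2)}$ of equal finite $2$-adic order necessarily differ by an element of strictly larger $2$-adic order (writing each as $2^n$ times a quotient of odd integers, one sees that their difference lies in $2^{n+1}\Z_{(2)}$).

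For uniqueness, suppose $A,A'\subset\{0,\dots,n\}$ are distinct subsets both satisfying the desired inequality, and let $j=\min(A\triangle A')\le n$. Subtracting the two instances of the inequality gives $\ord_2\bigl(\sum_{i\in A}q_i-\sum_{i\in A'}q_i\bigr)\ge n+1$, while this same expression equals $\pm q_j$ plus a sum of $\pm q_i$ with $i>j$; the first summand has $\ord_2=j$ and every other summand has $\ord_2>j$, so the total has $\ord_2=j\le n$, a contradiction. No real difficulty stands in the way beyond keeping careful track of $2$-adic orders in $\Z_{(2)}$, in particular the fact just used that equal-order elements differ by an element of strictly larger order; the inductive frame essentially forces everything else.
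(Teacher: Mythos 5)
Your proof is correct. The existence half is exactly the paper's argument: apply the inductive hypothesis to get $A_0\subset\{0,\dots,n-1\}$ with $\ord_2(q-\sum_{i\in A_0}q_i)\ge n$, and adjoin $n$ precisely when that order equals $n$, using the fact that two elements of equal finite $2$-adic order differ by an element of strictly larger order. Where you diverge is in the uniqueness half. The paper stays inside the induction: it shows that any valid $A\subset\{0,\dots,n\}$ must satisfy $\ord_2(q-\sum_{i\in A\setminus\{n\}}q_i)\ge n$, invokes the inductive uniqueness to conclude $A\setminus\{n\}=A'$, and then rules out both $A'$ and $A'\cup\{n\}$ working simultaneously because that would force $\ord_2 q_n\ge n+1$. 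You instead give a direct, non-inductive argument: for two distinct candidates $A,A'$ you look at $j=\min(A\triangle A')$ and observe that $\sum_{i\in A}q_i-\sum_{i\in A'}q_i$ has $2$-adic order exactly $j\le n$ (the $\pm q_j$ term dominates ultrametrically), contradicting the bound $\ge n+1$ obtained by subtracting the two hypotheses. This is a clean ``lowest differing digit'' argument; it buys you uniqueness for every $n$ in one stroke and isolates the reusable fact that distinct subset sums of the $q_i$ are $2$-adically separated at the minimal index where the subsets differ, at the small cost of the induction now carrying only the existence statement. Both routes are complete and rigorous.
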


\begin{proof}
 We proceed by induction on $n$.
 The lemma is trivial for $n=0$ because the only choice for $A$ is $\emptyset$.
 Suppose that the lemma is true for a nonnegative integer $n$
 and that $q_0,\dots,q_n\in\Z_{(2)}$ satisfy $\ord_2q_i=i$ for all $i=0,\dots,n$.

 Let $q\in\Z_{(2)}$ be given.
 The inductive hypothesis shows that there exists a unique subset $A'$ of $\{0,\dots,n-1\}$ such that $\ord_2(q-\sum_{i\in A'}q_i)\ge n$.

 We first show the existence of the required subset $A$ of $\{0,\dots,n\}$.
 If $\ord_2(q-\sum_{i\in A'}q_i)\ge n+1$, then take $A=A'$;
 if $\ord_2(q-\sum_{i\in A'}q_i)=n$, then setting $A=A'\cup\{n\}$ gives $\ord_2(q-\sum_{i\in A}q_i)\ge n+1$
 because $\ord_2(\alpha+\beta)\ge n+1$ for all $\alpha,\beta\in\Q$ with $\ord_2\alpha=\ord_2\beta=n$.

 What remains to prove is the uniqueness of such $A$.
 Suppose that $A\subset\{0,\dots,n\}$ satisfies $\ord_2(q-\sum_{i\in A}q_i)\ge n+1$.
 Since $q-\sum_{i\in A\setminus\{n\}}q_i$ equals either $q-\sum_{i\in A}q_i$ or $q-\sum_{i\in A}q_i+q_n$,
 we must have $\ord_2(q-\sum_{i\in A\setminus\{n\}}q_i)\ge n$ because $\ord_2q_n=n$.
 Therefore it follows from the inductive hypothesis that $A\setminus\{n\}=A'$, which means that $A$ is either $A'$ or $A'\cup\{n\}$.
 If $\ord_2(q-\sum_{i\in A'}q_i)\ge n+1$ and $\ord_2(q-\sum_{i\in A'\cup\{n\}}q_i)\ge n+1$ were both true, then we would also have $\ord_2q_n\ge n+1$,
 contradicting the assumption that $\ord_2q_n=n$.
 This completes the proof of the uniqueness of $A$.
\end{proof}

\begin{lem}\label{lem:existence_q_n}
 There exists a sequence $q_0,q_1,\dotsc\in\Z_{(2)}$ with the following properties:
 \begin{enumerate}
  \item $\ord_2q_n=n$ for all $n\in\N$;
  \item for every $q\in\Z_{(2)}$ there exists a finite subset $A$ of $\N$ such that $q=\sum_{n\in A}q_n$.
 \end{enumerate}
\end{lem}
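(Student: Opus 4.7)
The plan is to construct the sequence $(q_n)$ by a stage-by-stage diagonalization that simultaneously arranges each element of $\Z_{(2)}$ to become expressible as a finite sum. Since $\Z_{(2)}\subset\Q$ is countable, fix an enumeration $s_0,s_1,\dotsc$ of $\Z_{(2)}$. At the start of stage $k$, the construction will have produced $q_0,\dots,q_{n_k-1}$ with $\ord_2q_i=i$ for $i<n_k$, and each of $s_0,\dots,s_{k-1}$ will already be expressible as $\sum_{i\in A_j}q_i$ for some finite $A_j\subset\{0,\dots,n_k-1\}$.

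At stage $k$, I would apply Lemma \ref{lem:Z_2_binary_representation} with $n=n_k$ to obtain the unique $A\subset\{0,\dots,n_k-1\}$ with $\ord_2\bigl(s_k-\sum_{i\in A}q_i\bigr)\ge n_k$, and set $r=s_k-\sum_{i\in A}q_i$. If $r=0$, then $s_k$ is already represented; in this case simply put $q_{n_k}=2^{n_k}$ and $n_{k+1}=n_k+1$ so that the index range grows. Otherwise let $m=\ord_2r$, which is a specific integer $\ge n_k$. Define $q_i=2^i$ for $n_k\le i<m$ and $q_m=r$, and set $n_{k+1}=m+1$. Then $\ord_2q_i=i$ continues to hold for every newly defined index, and by construction $s_k=\sum_{i\in A\cup\{m\}}q_i$, so $s_k$ has just been expressed as a finite sum.

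Taking the union over all stages yields a sequence $(q_n)_{n\in\N}$ since $n_{k+1}>n_k$ forces $n_k\to\infty$. Property (1) is built in, and property (2) holds because once some $s_k$ has been expressed as a finite sum of certain $q_i$'s at stage $k$, later stages only append new $q_i$'s and never alter earlier ones, so the representation persists.

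I do not expect a real obstacle here: Lemma \ref{lem:Z_2_binary_representation} does the heavy lifting by pinning down the unique "partial expansion" of $s_k$ relative to $q_0,\dots,q_{n_k-1}$, and the only real idea is to absorb the resulting nonzero remainder $r$ in one shot by defining the single new coefficient $q_m=r$ (padding the intermediate slots with $2^i$ merely to keep the sequence indexed by $\N$). The one routine point to verify in the write-up is that $\ord_2r$ is indeed a well-defined integer at least $n_k$ whenever $r\ne0$, which is immediate from $r\in\Z_{(2)}$ and the lemma.
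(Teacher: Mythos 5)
Your construction is correct: at each stage Lemma~\ref{lem:Z_2_binary_representation} pins down the unique remainder $r=s_k-\sum_{i\in A}q_i$ with $\ord_2r\ge n_k$, and absorbing it in one shot as $q_m=r$ with $m=\ord_2r$ (padding indices $n_k\le i<m$ with $2^i$) makes $s_k$ representable immediately; since $n_{k+1}>n_k$, the whole sequence gets defined, and representations persist because earlier terms are never altered. The paper uses the same key lemma and the same absorb-the-remainder mechanism, but organizes the induction the other way around: it defines exactly one new term $q_n$ at step $n$, choosing as its target the element $h(l_n)$ of smallest index whose remainder relative to $q_0,\dots,q_{n-1}$ has order exactly $n$. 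That priority scheme forces the verification of property~(2) to be an argument by contradiction (take the minimal unrepresented index $l$, show it would have been selected as $l_{N'}$ for a suitable $N'$), whereas your stage-by-stage version, at the modest cost of defining several $q_i$ per stage and inserting filler terms, makes property~(2) true by construction. Both are valid; yours trades a slightly bulkier construction for an essentially trivial verification.
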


\begin{proof}
 Fix a bijection $h\colon\N\to\Z_{(2)}$.
 For each $n\in\N$,
 when $q_0,\dots,q_{n-1}\in\Z_{(2)}$ have been constructed,
 take the smallest $l_n\in\N$ such that $\ord_2(h(l_n)-\sum_{i\in A}q_i)=n$ for some $A\subset\{0,\dots,n-1\}$
 (such $l_n$ always exists because the condition follows for example from $\ord_2h(l_n)=n$;
 Lemma~\ref{lem:Z_2_binary_representation} implies that $A$ is unique),
 and set $q_n=h(l_n)-\sum_{i\in A}q_i$.

 Suppose that (2) is false, and
 take the smallest $l\in\N$ for which there is no finite subset $A$ of $\N$ such that $h(l)=\sum_{i\in A}q_i$.
 Choose $N\in\N$ so large that for every $k=0,\dots,l-1$ there exists $A_k\subset\{0,\dots,N-1\}$ such that $h(k)=\sum_{i\in A_k}q_i$.
 By Lemma~\ref{lem:Z_2_binary_representation}, there exists $A_l\subset\{0,\dots,N-1\}$ such that $\ord_2(h(l)-\sum_{i\in A_l}q_i)\ge N$.
 Set $N'=\ord_2(h(l)-\sum_{i\in A_l}q_i)$.
 Since $\ord_2(h(k)-\sum_{i\in A_k}q_i)=\infty>N'$ for $k=0,\dots,l-1$,
 we must have $l_{N'}\ge l$.
 Since $l$ has the property required for $l_{N'}$, this means that $l_{N'}=l$,
 contradicting the choice of $l$.
\end{proof}

\begin{lem}\label{lem:existence_r_n}
 There exists a sequence $r_0,r_1,\dotsc\in\Q^{\times}$ such that if we set $S_n=\{\sum_{i\in A}r_i\mid A\subset\{0,\dots,n-1\}\}$ for $n\in\N$, we have the following:
 \begin{enumerate}
  \item $\bigcup_{n=0}^{\infty}S_n=\Q$;
  \item $\ord_2(a-b)<\ord_2r_n$ whenever $n\in\N$ is even and $a,b\in S_n$ are distinct;
  \item $\ord_2(a-b)>\ord_2r_n$ whenever $n\in\N$ is odd and $a,b\in S_n$.
 \end{enumerate}
\end{lem}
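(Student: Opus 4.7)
The plan is to combine the sequence $(q_n)_{n\in\N}$ from Lemma~\ref{lem:existence_q_n}, which spans $\Z_{(2)}$ via finite subset sums, with negative powers of~$2$ to reach rationals of negative $2$-adic order. Concretely, I would set
\[
r_{2k}=q_k,\qquad r_{2k+1}=2^{-(k+1)}\qquad(k\in\N),
\]
so that the values $\ord_2 r_n$ form the sequence $0,-1,1,-2,2,-3,3,\dotsc$, alternately contributing a new maximum at each even step and a new minimum at each odd step. In particular all $\ord_2 r_i$ are distinct.

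For conditions~(2) and~(3), I would exploit this distinctness: by the ultrametric property, any nonempty $\pm1$ combination $\sum_{i\in C}\epsilon_i r_i$ with $C\subset\{0,\dots,n-1\}$ has $\ord_2$ equal to $\min_{i\in C}\ord_2 r_i$. Thus for distinct $a,b\in S_n$, the value $\ord_2(a-b)$ lies in $\{\ord_2 r_i\mid i<n\}$, so it is bounded above by $\max_{i<n}\ord_2 r_i$ and below by $\min_{i<n}\ord_2 r_i$. Comparing with $\ord_2 r_n$, which for $n=2k$ equals $k$ and exceeds the preceding maximum $k-1$, and for $n=2k+1$ equals $-(k+1)$ and falls below the preceding minimum $-k$, gives~(2) and~(3) (the case $a=b$ being trivial in (3)).

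For~(1), the key auxiliary claim is that every $q\in\Q$ decomposes uniquely as $q=d+z$ with $d$ a dyadic rational in $[0,1)$ and $z\in\Z_{(2)}$. This follows from the isomorphism $\Z_{(2)}/2^m\Z_{(2)}\cong\Z/2^m\Z$: taking $m$ large enough that $2^m q\in\Z_{(2)}$, one picks the unique representative $a\in\{0,\dots,2^m-1\}$ of its class and sets $d=a/2^m$, $z=q-d$. Given such a decomposition, $d$ has a finite binary expansion $d=\sum_{j\in C}2^{-(j+1)}$, while Lemma~\ref{lem:existence_q_n} yields $z=\sum_{k\in B}q_k$ for some finite $B\subset\N$; hence $q$ is a finite subset sum of the $r_i$'s, namely $q\in S_n$ for $A=\{2j+1\mid j\in C\}\cup\{2k\mid k\in B\}$ and all sufficiently large $n$.

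The main obstacle I anticipate is establishing the dyadic decomposition cleanly — both the existence (reducing to modular arithmetic after scaling by $2^m$) and the uniqueness (using that a nonzero dyadic rational in $(-1,1)$ cannot lie in $\Z_{(2)}$). Once this auxiliary fact is in hand, the remaining verifications reduce to bookkeeping comparisons of the $\ord_2 r_i$'s.
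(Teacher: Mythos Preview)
Your proposal is correct and follows the same approach as the paper: the definition $r_{2k}=q_k$, $r_{2k+1}=2^{-(k+1)}$ coincides with the paper's, and your arguments for (2), (3) via the distinctness of the $\ord_2 r_i$ and for (1) via the decomposition $q=d+z$ with $d$ dyadic in $[0,1)$ and $z\in\Z_{(2)}$ are exactly the paper's reasoning, though you spell out the ultrametric step and the decomposition more explicitly than the paper does.
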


\begin{proof}
 Take a sequence $q_0,q_1,\dotsc\in\Z_{(2)}$ as in Lemma~\ref{lem:existence_q_n}, and define $r_0,r_1,\dotsc\in\Q^{\times}$ by
 \[
  r_n=
  \begin{cases}
   q_{n/2}&\text{if $n$ is even};\\
   2^{-(n+1)/2}&\text{if $n$ is odd}.
  \end{cases}
 \]

 To show (1), we observe that every $r\in\Q$ can be written (in fact uniquely) as $q+\sum_{j\in B}2^{-j}$
 with $q\in\Z_{(2)}$ and a finite set $B\subset\N_+$.

 To show (2), suppose that $n=2m$ for some $m\in\N_+$ and that $a,b\in S_{2m}$ are distinct
 (we may assume that $n\ge2$ because $S_0$ consists of only one element).
 Since $a-b$ is a nonzero rational number that can be written
 as a linear combination of $q_0,\dots,q_{m-1}$ and $2^{-1},\dots,2^{-m}$ with coefficients in $\{0,\pm1\}$,
 we have
 \begin{align*}
  \ord_2(a-b)&\le\max\{\ord_2q_0,\dots,\ord_2q_{m-1},\ord_22^{-1},\dots,\ord_22^{-m}\}\\
  &=m-1<m=\ord_2r_n.
 \end{align*}

 To show (3), suppose that $n=2m+1$ for some $m\in\N$ and that $a,b\in S_{2m+1}$.
 Since $a-b$ can be written as a linear combination of $q_0,\dots,q_m$ and $2^{-1},\dots,2^{-m}$ with coefficients in $\{0,\pm1\}$, we have
 \begin{align*}
  \ord_2(a-b)&\ge\min\{\ord_2q_0,\dots,\ord_2q_m,\ord_22^{-1},\dots,\ord_22^{-m}\}\\
  &=-m>-m-1=\ord_2r_n.\qedhere
 \end{align*}
\end{proof}

\begin{lem}\label{lem:add_outside}
 Let $(X,\preceq)$ be a totally ordered set without a maximum.
 Let $S\subset\Q$ be finite and $r\in\Q^{\times}$ be such that $\ord_2(a-b)>\ord_2r$ for all $a,b\in S$.
 If $f\colon S\to X$ is binary and $\tilde{S}=S\cup(S+r)$, then there exists a binary map $\tilde{f}\colon\tilde{S}\to X$ with $f\subset\tilde{f}$.
\end{lem}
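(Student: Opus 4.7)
The approach is simpler than that behind Lemma~\ref{lem:add_odd}: instead of interleaving $\tilde f(S+r)$ among $f(S)$, I will stack the entire image $\tilde f(S+r)$ strictly above $f(S)$, ordering it so as to mirror the $f$-ordering on $S$. The hypothesis $\ord_2(a-b)>\ord_2r$ says that $S$ and $S+r$ are well separated $2$-adically, so Lemma~\ref{lem:S_and_S+r}(1) gives $S\cap(S+r)=\emptyset$ and $\tilde S=S\sqcup(S+r)$ decomposes into two natural blocks. This $2$-adic block structure is what lets the weaker order-theoretic hypothesis of ``no maximum'' suffice in place of the ``no isolated points'' required by Lemma~\ref{lem:add_odd}.

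For the construction, enumerate $S=\{a_1,\dots,a_m\}$ with $f(a_1)\prec\dots\prec f(a_m)$, and, using that $X$ has no maximum, successively pick elements $x_1\prec\dots\prec x_m$ of $X$ all strictly above $f(a_m)$. Define $\tilde f\supset f$ by $\tilde f(a_i+r)=x_i$; this is manifestly an injection.

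To verify binarity, I would suppose for contradiction that $a,b,c\in\tilde S$ satisfy $\tilde f(a)\prec\tilde f(b)\prec\tilde f(c)$ and $\ord_2(b-a)=\ord_2(c-b)=n$. Lemma~\ref{lem:S_and_S+r}(3) provides a clean dichotomy: for distinct $u,v\in\tilde S$, the quantity $\ord_2(u-v)$ is always at least $\ord_2r$, strictly exceeding it when $u$ and $v$ lie on the same side of the partition and equaling it when they lie on opposite sides. Hence $n\ge\ord_2r$. If $n>\ord_2r$, then $a,b,c$ all lie on one side; the all-in-$S$ subcase contradicts binarity of $f$ directly, and the all-in-$S+r$ subcase does the same after shifting by $-r$, using that $x_1\prec\dots\prec x_m$ mirrors $f(a_1)\prec\dots\prec f(a_m)$. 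If $n=\ord_2r$, then $a$ and $c$ lie on one side and $b$ on the other; but then the block placement (every element of $\tilde f(S+r)$ strictly above every element of $\tilde f(S)$) is incompatible with the strict chain $\tilde f(a)\prec\tilde f(b)\prec\tilde f(c)$ in either configuration.

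The main point to get right, rather than a real obstacle, is the mirroring: ordering $x_1,\dots,x_m$ in the same way as $f(a_1),\dots,f(a_m)$ is essential for the all-in-$S+r$ subcase to collapse to the binarity of $f$; any other ordering would leave room for a genuine binary violation within the upper block.
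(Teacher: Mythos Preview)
Your proposal is correct and follows essentially the same approach as the paper: both stack $\tilde f(S+r)$ strictly above $f(S)$ in an order mirroring that of $f(S)$, and both reduce the binarity check to the block structure supplied by Lemma~\ref{lem:S_and_S+r}(3). The only cosmetic difference is that the paper splits into the four cases $(a,b,c)\in S^3$, $S^2\times(S+r)$, $S\times(S+r)^2$, $(S+r)^3$ directly from the order chain, whereas you first split on whether $n=\ord_2r$ or $n>\ord_2r$; the resulting subcases coincide.
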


\begin{proof}
 Take the enumeration $a_1,\dots,a_m$ of $S$ so that $f(a_1)\prec\dots\prec f(a_m)$.
 We define an injection $\tilde{f}\colon\tilde{S}\to X$ with $f\subset\tilde{f}$
 by inductively defining $\tilde{f}(a_i+r)$ as an element of $X$ larger than $f(a_1),\dots,f(a_m),\tilde{f}(a_1+r),\dots,\tilde{f}(a_{i-1}+r)$,
 using the assumption that $X$ has no maximum.

 To prove that $\tilde{f}$ is binary, take any $a,b,c\in\tilde{S}$ with $\tilde{f}(a)\prec\tilde{f}(b)\prec\tilde{f}(c)$.
 We have either (i)~$a,b,c\in S$, (ii)~$a,b\in S$ and $c\in S+r$, (iii)~$a\in S$ and $b,c\in S+r$, or (iv)~$a,b,c\in S+r$.
 In cases (i) and (iv), we have $\ord_2(b-a)\ne\ord_2(c-b)$ because $f$ is binary.
 Lemma~\ref{lem:S_and_S+r} implies that $\ord_2(b-a)>\ord_2r=\ord_2(c-b)$ in case (ii) and that $\ord_2(b-a)=\ord_2r<\ord_2(c-b)$ in case (iii).
\end{proof}

\begin{prop}\label{prop:construction_Q}
 Let $(X,\preceq)$ be a countably infinite totally ordered set.
 Suppose that $X$ has no isolated points and
 that either $X$ does not have a maximum or $X$ does not have a minimum.
 Then there exists a binary bijection $f\colon\Q\to X$.
\end{prop}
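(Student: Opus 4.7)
The plan is to adapt the inductive construction from Proposition~\ref{prop:construction_N,Z}, using the sequence $r_0,r_1,\dotsc$ supplied by Lemma~\ref{lem:existence_r_n} in place of the ad hoc powers of $\pm2$, and alternating between Lemma~\ref{lem:add_odd} and Lemma~\ref{lem:add_outside} according to the parity of $n$.

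First I would record a symmetry remark: being binary, having no isolated points, and having no maximum (respectively no minimum) all behave symmetrically under reversal of $\preceq$, since a triple $f(a)\prec f(b)\prec f(c)$ reverses to $f(c)\prec f(b)\prec f(a)$ while $\ord_2(b-a)$ and $\ord_2(c-b)$ merely swap, the order topology is unchanged, and ``no maximum'' exchanges with ``no minimum''. After possibly reversing the order on $X$, I may therefore assume throughout that $X$ has no maximum.

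Next I would take $r_0,r_1,\dotsc\in\Q^{\times}$ from Lemma~\ref{lem:existence_r_n}, set $S_n=\{\sum_{i\in A}r_i\mid A\subset\{0,\dots,n-1\}\}$ so that $S_{n+1}=S_n\cup(S_n+r_n)$ and $\bigcup_n S_n=\Q$, fix a bijection $g\colon\N\to X$, and inductively construct binary maps $f_n\colon S_n\to X$ with $f_0\subset f_1\subset\dotsb$, starting from $f_0(0)=g(0)$. When passing from $f_n$ to $f_{n+1}$ I would let $k$ be the smallest index with $g(k)\notin\Ima f_n$ and split on the parity of $n$. For even $n$, Lemma~\ref{lem:existence_r_n}(2) delivers $\ord_2(a-b)<\ord_2 r_n$ for distinct $a,b\in S_n$, so Lemma~\ref{lem:add_odd} applied with $x=g(k)$ yields a binary extension $f_{n+1}$ with $g(k)\in\Ima f_{n+1}$. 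For odd $n$, Lemma~\ref{lem:existence_r_n}(3) delivers $\ord_2(a-b)>\ord_2 r_n$ for $a,b\in S_n$, so Lemma~\ref{lem:add_outside} (whose ``no maximum'' hypothesis is now guaranteed) yields a binary extension $f_{n+1}$.

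Finally I would set $f=\bigcup_{n=0}^{\infty}f_n$. Since $\bigcup_n S_n=\Q$, this is a binary map $\Q\to X$, and it is surjective because every even step strictly lowers the smallest still-unhit index of $g$, so each $g(k)$ lies in $\Ima f_n$ for all sufficiently large $n$. I do not anticipate any genuine obstacle, as the technical content is already packaged in the preceding lemmas. The one conceptual subtlety worth flagging is why \emph{two} extension lemmas are needed: because $r_n$ for odd $n$ has $2$-adic order strictly below every difference in $S_n$, the new points $S_n+r_n$ cannot in general be interleaved with the existing image points (as in Lemma~\ref{lem:add_odd}) and must instead be placed outside the convex hull of the image, and the ``no maximum'' assumption is precisely what allows Lemma~\ref{lem:add_outside} to do this.
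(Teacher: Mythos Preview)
Your proposal is correct and follows essentially the same approach as the paper: reduce by symmetry to the case where $X$ has no maximum, take the sequence $r_n$ from Lemma~\ref{lem:existence_r_n}, alternate between Lemma~\ref{lem:add_odd} on even steps (hitting the least unhit $g(k)$) and Lemma~\ref{lem:add_outside} on odd steps, and take the union. The only cosmetic differences are that the paper defines $k$ only on even steps and phrases surjectivity as $g(0),\dots,g(k)\in\Ima f_{2k+1}$; your phrase ``lowers the smallest still-unhit index'' should read ``raises'', but the intended argument is the same.
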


\begin{proof}
 By symmetry, we may assume that $X$ does not have a maximum.
 Take a sequence $r_0,r_1,\dotsc\in\Q^{\times}$ as in Lemma~\ref{lem:existence_r_n},
 and set $S_n=\{\sum_{i\in A}r_i\mid A\subset\{0,\dots,n-1\}\}$ for $n\in\N$.
 Fixing a bijection $g\colon\N\to X$,
 we shall inductively construct binary maps $f_n\colon S_n\to X$ for $n\in\N$ so that $f_0\subset f_1\subset\dotsb$.

 Define a binary map $f_0$ from $S_0=\{0\}$ to $X$ by setting $f_0(0)=g(0)$.
 Let $n\in\N$, and suppose that $f_n\colon S_n\to X$ has been constructed.
 If $n$ is even, then take the smallest $k\in\N$ with $g(k)\notin\Ima f_n$, and use Lemma~\ref{lem:add_odd} to
 construct a binary map $f_{n+1}\colon S_{n+1}\to X$ with $f_n\subset f_{n+1}$ and $g(k)\in\Ima f_{n+1}$.
 If $n$ is odd, then use Lemma~\ref{lem:add_outside} to construct a binary map $f_{n+1}\colon S_{n+1}\to X$ with $f_n\subset f_{n+1}$.

 Define $f=\bigcup_{n=0}^{\infty}f_n$.
 Then $f$ is a binary map from $\bigcup_{n=0}^{\infty}S_n=\Q$ to $X$.
 Since the construction ensures that $g(0),\dots,g(k)\in\Ima f_{2k+1}$ for all $k\in\N$,
 it follows that $f$ is surjective.
\end{proof}

Propositions~\ref{prop:construction_N,Z} and \ref{prop:construction_Q} complete the proof of the `if' parts of our main theorem.

\section*{Acknowledgements}
This work was supported by JSPS KAKENHI Grant Numbers JP18K03243, JP18K13392, JP22K03244, and JP23K03072.
The authors would like to thank Hideki Murahara for valuable discussions.

\end{document}